\documentclass[reqno]{amsart}
\usepackage{graphicx}
\usepackage{indentfirst,csquotes}

\usepackage{amssymb,amsthm,amsmath}
\usepackage{xcolor,paralist,hyperref,titlesec,fancyhdr,etoolbox}
\newtheorem{theorem}{Theorem}[]

\usepackage{multirow}
\newtheorem{assumption}{Assumption}

\usepackage{amsthm}
\theoremstyle{remark}
\newtheorem{remark}{Remark}

\titleformat{\section}
  {\normalfont\Large\bfseries\centering}
  {\thesection}{1em}{}

\titlespacing*{\subsection}
{0pt}      % left indent (keep 0pt => left aligned)
{1.0ex}    % space before
{0.8ex}    % space after (increase this to separate from paragraph)

\titlespacing*{\subsubsection}
{0pt}
{0.8ex} 
{0.6ex}
\titleformat{\subsection}
  {\normalfont\large\bfseries}  % style (left aligned by default)
  {\thesubsection}{1em}{}

\hypersetup{ colorlinks=true, linkcolor=black, filecolor=black, urlcolor=black }

\usepackage{lipsum}

\begin{document}
\title{Nonlinear parameter-varying embeddings for nonlinear state estimation with
application to a two-link robot manipulator} %%%%%%%%%%%%
\author[Initial Surname]{JIAXIN JI, Shivaraj Mohite and JAN HEILAND}
%\address{Address}
%\email{example@mail.com}

% \let\thefootnote\relax
% \footnotetext{MSC2020: Primary 00A05, Secondary 00A66.} %%%%%%%%%%

\begin{abstract}
Observer design for nonlinear systems is a relevant and challenging task in
systems and control design. 
In this work, we follow the idea of embedding the system in the class of nonlinear
parameter-varying systems to benefit from linear structures as in a standard LPV
embedding while keeping some nonlinear structures and, thus, reducing the
numbers of scheduling-parameters in the representation. 
We lay out the NLPV observer design procedure for general nonlinear systems,
propose a number of improvements, and exemplify the application for a two-arm robot
model. 
In a numerical study, we compare the performance of the NLPV design to
established
standard nonlinear approaches such as the \emph{extended Kalman filter} and the
\emph{moving horizon estimation}.
\end{abstract} %%%%%%%%%
\maketitle
% \bigskip

% \noindent \lipsum[1] \cite{1}

% $\,$

% $\,$

%from section introduction to conclusion

\section{Introduction}

In this work, we consider nonlinear systems of type
\begin{equation}\label{eq:gen-nonl-uaffine}
  \dot x(t) = \bar f(x(t)) + B(x(t))\,u(t), \quad y(t) = Cx(t),
\end{equation}
and the general problem of observer design. That is we look for a dynamical
system that takes the available output $y$ of \eqref{eq:gen-nonl-uaffine} and
provides estimates on the otherwise unknown state $x$ to be used, e.g., for feedback control. 
For simplicity, we consider only affine control input and linear output.
The (non)linear parameter-varying (NLPV) theory, however,
readily extends to general nonlinear input and output
models. 
In what follows and where the context is clear, we will drop the
independent variables and write, e.g. $\rho$ or $g\,u$ rather than $\rho(x(t))$ or
$g(x(t))\,u(t)$.

For (N)LPV representations of \eqref{eq:gen-nonl-uaffine}, we seek parametrizations $\rho(x)$ and $\theta=\theta(x)$ of the system and the state
so that it is equivalently written as
\begin{equation}\label{eq:NLPV-factorization}
  \dot x = A(\theta)\,x + B(\theta)\, u + G(\theta)\,f(x), \quad y=Cx
\end{equation}
or
\begin{equation}\label{eq:LPV-factorization}
  \dot x = A(\rho)\,x + B(\rho)\, u , \quad y=Cx
\end{equation}
with some stretch in notation that $A$ and $B$ are different functions,
depending on which parametrization $\rho$ or $\theta$ is considered.
Here, the system \eqref{eq:LPV-factorization} is a classical LPV system, whereas
\eqref{eq:NLPV-factorization} represents a nonlinear LPV system that admits an
unparametrized nonlinearity $f$ for the sake of a simplified structure.

We note that such a linearly-varying parametrization is possible under mild
assumptions and is often indicated by the system's structure; see, e.g.,
\cite{BenH18} for a general procedure and \cite{HeiW23} on how an approximative
but lower-dimensional parametrization can be obtained. Since the NLPV
formulation is more general than the LPV, it exists under even less restrictive
conditions. We further note that (N)LPV embeddings are not unique in general.

% \todo[inline]{Looks like we don't do the LPV observer design at all...}
% \todo[inline]{Can we agree on using $\theta$ for NLPV representations and $\rho$
% for LPV?}

The main idea of the paper is to use recent developments on
observer design for NLPV systems, make it work for a quasi NLPV formulation of a
two-arm robot model as suggested in \cite{KoeT20}, and compare it to well established unstructured
methods for \eqref{eq:gen-nonl-uaffine} like the extended Kalman filter or
a \emph{moving horizon estimation} (MHE) as well as LPV designs for a
\eqref{eq:2arm-nonlin LPV form}. 
We note beforehand that for the used formulation, the plain LPV approaches as
proposed in \cite{ApkGB95} did
not work.
Indeed, the well established robust LPV controller design by \cite{ApkGB95} in
it's official Matlab implementation \texttt{hinfgs} was not able to provide a
feasible solution to the underlying system of linear matrix equations (LMI).
We interpret this negative result in favor of the flexibility offered by the NLPV design.

Further suggested designs in the literature are provided in \cite{AlJZ17} that proposes
an LMI based state-observer for polytopic LPV systems that is robust to
uncertainties in the parameter and in
    \cite{Mag05} which is a standard Luenberger observer for a linear fractional
    representation of an LPV system. Since in our model, the considered nonlinearities are not
    of rational type, a conversion into LFT form is not applicable.

Concerning the class of nonlinear parameter-varying systems, we refer to recent
publications from control system engineering; see \cite{RAJAMANI_NLPV_Obs,2019_Discrete_Khadilja,LPV_2020_RNC}. 
Applications have been reported in
% A nonlinear parameter-varying (NLPV) framework is employed for the 
automotive suspension models (see~\cite{LPV_2020_RNC, tran2022unified}) and
lateral vehicle dynamics; see~\cite{RAJAMANI_NLPV_Obs}. 
% A previous treatment of a two-arm robot model, apparently, has not been reported so far. 

In this work, we extend the LMI-based controller design for NLPV
systems, as we developed it in \cite{CDC_2024_Shiv_GDO,Shiv_LCSS_CDC_2025},
towards controller design. 

Apart from reducing the overall number of parameters (if compared to LPV representations),
the NLPV approach provides the freedom to only choose scheduling variables for
the observer that are directly observable. Thus, the practical and theoretical problem of
the possible need for estimating scheduling parameter from the measured outputs
is avoided. For references to the treatment of unknown parameters, see the works by \cite{AlJZ17}
(a general $H_\infty$ approach) or \cite{HalMD13} (for discrete-time systems).
% and \cite{BarDK01}(cont-time) ?) is used as a benchmark.

Throughout the paper, the ensuing notations are deployed:
% \begin{enumerate}[a)]
    % \item The initial value of $e(t)$ at $t=0$ is represented by $e_0$.   
      The symbols $||e(t)||$ and $||e||_{\mathcal{L}_2}$ depict the euclidean
      norm of a function value (i.e., a vector) and the $\mathcal{L}_2$ norm of
      a vector-valued function $e$, respectively. 
    % \item 
      The identity matrix and the null matrix are described by $\mathbb{I}$ and
      $\mathbb{O}$, respectively. Where it helps the understanding, we add a
      subscript like $\mathbb I_q$ to express the size of the matrices.
    % \item 
      The transpose of a matrix $A$ is illustrated as  $A^\top$. The notation $A \in \mathbf{S}^{n}$ indicates that $A\in \mathbb{R}^{n \times n}$ is a symmetric matrix. The symbol $(\star)$ denotes repeated blocks within a symmetric matrix. Further, the minimal and maximal eigenvalues of this matrix $A$ are expressed as $\lambda_{\min}(A) $ and $\lambda_{\max}(A)$, respectively. 
    %\item The term ${\| A \|}_2 = \sqrt{\lambda_{\max}(A^\top \cdot A)}$ depicts the Euclidean norm of the matrix $A\in \mathbb{R}^{m \times n}$ and it is computed by using ${| A |}2 = \sqrt{\lambda{\max}(A^\top \cdot A)}$. 
    % \item  
      $A = \text{block-diag}(A_1, \hdots,A_n)$ denotes a block-diagonal matrix having elements $A_1, \hdots, A_n$ in the diagonal. 
    % \item 
      $A\in \mathbf{S}^n_{+}$ and $A\in \mathbf{S}^n_{++}$ are used to represent a positive semidefinite ($A\geq 0$) and positive definite ($A>0$), respectively. 
    % \item 
      Similarly, the inequalities $A<0$ and $A\leq 0$ indicate that $A \in \mathbf{S}^{n}$ is negative definite and negative semidefinite, respectively.
    % \item 
      The sum of the matrix $X$ and its transpose is described as a function $\mathcal{H_E}(X)$, i.e., $\mathcal{H_E}(X)=X^\top+X$.
\section{NLPV observer design}
% In the previous section, the authors have described the classical approach for the state estimation of the LPV model of a two-link robotic arm manipulator~\eqref{eq:2arm-nonlin LPV form}. 
% In recent times, the synthesis of observers for Nonlinear Parameter-Varying (NLPV) systems has become a popular research topic in control system engineering (\cite{2019_Discrete_Khadilja,LPV_2020_RNC. In this section, the authors will focus on the observer-based state estimation of system~\eqref{eq:2arm-nonlin NLPV form}. 
  % Further, we will proposed an LMI-based methodology to compute the observer parameters.

% \subsection{Introducing NLPV observer}
We recall the general idea of NLPV observer design.
Let us consider the following generalised disturbance-affected NLPV model:
\begin{equation}\label{gen NLPV form with d}
    \begin{split}
        \dot{x}(t)&= A(\theta) x(t)+ G(\theta) f(x(t))+B(\theta) u(t)+E
        \omega(t),\\
        y(t)&=C x(t)+D \omega,
    \end{split}
\end{equation}
where $x(t)\in \mathbb{R}^{n}$, $u(t)\in \mathbb{R}^s$ and $y(t)\in \mathbb{R}^p$
represent the states of the systems, control input, and the system's output,
respectively. $\omega(t) \in \mathbb{R}^{q}$ depicts the bounded disturbance/noise
present inside the system dynamics and output. The matrices
$A(\cdot),~B(\cdot),~G(\cdot)$ are functions of the vector $\theta(t)$. ~$C,~E$
and $D$ are constant matrices of appropriate dimension. The $f\colon \mathbb
R^{n}\to
\mathbb{R}^m$ represents the nonlinear function inside the system dynamics for
which we assume boundedness and sufficient smoothness so that
\begin{equation}\label{gen NLPV form f lip bound}
    \bar{f}_{ij_{\min}}\leq \dfrac{\partial f_i(x)}{\partial x_j}\leq
    \bar{f}_{ij_{\max}},\quad \text{for }i=1,\dotsc,m,\quad j=1,\dotsc,n. 
\end{equation} 
%It leads to
%\begin{equation}\label{gen NLPV form f Gamma bound}
%       \underline{\Gamma} \leq \Gamma\leq \bar{\Gamma,}
%    \end{equation}
%    where $\underline{\Gamma}\triangleq \begin{bmatrix}
%        f_{ij_{\min}} 
%    \end{bmatrix}_{m\times n}$ and $\bar{\Gamma}\triangleq \begin{bmatrix}
%        f_{ij_{\max}} 
 %   \end{bmatrix}_{m\times n}$.

% \todo[inline]{JH: this should be reformulated. (1) a Lipshitz function may not
%   have a Jacobian matrix (2) $\Gamma$ is not symmetric (and probably not
%   definite) so that $\geq$ can not be used.}

Concerning the time-varying parameter $\theta(t)\in \mathbb{R}^{{n_\theta}}$ and
the coefficients in the system~\eqref{gen NLPV form with d}, we impose the
following restrictions:
\begin{assumption}~
  \begin{itemize}
    \item[(A1)] The time varying parameter $\theta(t)$ is assumed to be known and bounded, i.e., it holds:
\begin{equation}\label{gen NLPV form theta bound}
   \theta_{i_{\min}}   \leq \theta_{i}\leq \theta_{i_{\max}}, \quad \forall {i}=\{1,\hdots,{n_\theta}\}. 
\end{equation}
\item[(A2)] The system matrices $A(\cdot)\in \mathbb{R}^{n \times n}$, $B(\cdot)\in
\mathbb{R}^{n \times s}$, and $G(\cdot)\in \mathbb{R}^{n \times m}$ are known
functions that are \emph{affine} in the parameter $\theta(t)$, i.e.
% Since $\theta(t)$ is known, 
one can rewrite the matrices $A(\theta),~B(\theta),~G(\theta)$ as:
\begin{equation*}%\label{gen NLPV form sys matrcies}
    \begin{split}
      A(\theta(t))=A_0+{\sum^{n_\theta}_{i=1}}\theta_i A_i,~ \\
      B(\theta(t))=B_0+{\sum^{n_\theta}_{i=1}}\theta_i B_i,~ \\
       G(\theta(t))=G_0+{\sum^{n_\theta}_{i=1}}\theta_i G_i. \
    \end{split}
\end{equation*}
with given constant matrices $A_i$, $B_i$, $G_i$, for $i=0,1, \dotsc, n_\theta$.
\item[(A3)] The matrices $C\in \mathbb{R}^{p \times n},~E\in \mathbb{R}^{n \times q}$ and $D\in \mathbb{R}^{p \times q}$ are constant matrices.
\end{itemize}
\end{assumption}
\begin{remark}\label{Rem 1 noise}
    If the system dynamics and output are affected by two distinct noise vectors, $\omega_1$ and $\omega_2$, through $E_1$ and $D_1$ respectively, then the system can be rewritten in the form of~\eqref{gen NLPV form with d} by introducing $$E = \begin{bmatrix}E_1 & \mathbb{O}\end{bmatrix},~D = \begin{bmatrix}\mathbb{O} & D_1\end{bmatrix},~\text{and}~\omega = \begin{bmatrix}\omega_1^\top & \omega_2^\top\end{bmatrix}^\top.$$
\end{remark}
The subsequent Luenberger-like observer is employed for the state estimation of the system~\eqref{gen NLPV form with d}:
\begin{equation}\label{NLPV obs}
    \dot{\hat{x}}(t)= A(\theta) \hat{x}(t)+ G(\theta) f(\hat{x}(t))+B(\theta) u(t)+L(\theta)(y-C \hat{x}(t)),
\end{equation}
where $\hat{x}(t)\in \mathbb{R}^n$ is an estimated states. The observer gain $L(\theta)$ is expressed in the following form:
\begin{equation}\label{NLPV obs L form}
    \begin{split}
      L(\theta(t))&=L_0+{\sum^{n_\theta}_{i=1}}\theta_i L_i.
    \end{split}
\end{equation}

Let us define the estimation error as $e(t)=x(t)-\hat{x}(t)$. Through the use of~\eqref{gen NLPV form with d} and~\eqref{NLPV obs L form}, one can deduce:
\begin{equation}\label{NLPV e dyn}
    \begin{split}
     \dot{e}(t)&= (A(\theta) - L(\theta) C)\hat{e}(t)+ G \tilde{f}+(E-L(\theta)D)\omega,
    \end{split}
\end{equation}
where $\tilde{f}=f(x(t))-f(\hat{x}(t))$.

In what follows, we present a newly developed LMI condition to determine the observer gain $L(\theta)$, such that
\begin{enumerate}[1)]
    \item When $\omega=0$, the estimation error dynamic~\eqref{NLPV e dyn} is exponentially stable.
    \item When $\omega\neq 0$, the  dynamic~\eqref{NLPV e dyn} fulfill the ensuing
$\mathcal{H}_\infty$ criterion:
\begin{equation}\label{NLPV e H inf criterion}
    \|\tilde{x}\|_{\mathcal{L}_2}\leq \sqrt{\nu_1 \|\tilde{x}_0\|^2+\nu_2\|\omega\|^2_{\mathcal{L}_2}},
\end{equation}
where $\nu_1,~\nu_2 >0$.
\end{enumerate}
 
%In order to compute the parameters of the observer~\eqref{NLPV obs}, we state the following theorem:
\section{Performance of NLPV Observer}
%For the enhancement of comprehensibility, the authors have divided the proof into two parts.\begin{itemize}\item[A]  
Let us consider the subsequent Lyapunov function for the stability analysis:
    \begin{equation}\label{NLPV edot Ly}
        V(e(t))= (e(t))^\top P e(t),~P\in \mathbb{S}^n_{++} .
    \end{equation}
    It yields:
    \begin{equation*}
    \lambda_{\min}(P)\|e(t)\|^2_{2} \leq V(e(t)) \leq \lambda_{\max}(P)\|e(t)\|^2_{2}.
    \end{equation*}
According to~\cite[Remark 1]{Shiv_RNC_2024_LMI}, the error dynamics~\eqref{NLPV e dyn} satisfy $\mathcal{H}_\infty$ criterion~\eqref{NLPV e H inf criterion} if Lyapunov function~\eqref{NLPV edot Ly} fulfils:
\begin{equation}\label{NLPV e H inf con}
    \dot{V}(e(t))+\sigma V(e(t))-\mu \omega^\top\omega \leq 0,
\end{equation}
where $\sigma,~\mu$ are positive scalars.
    
Further,
    \begin{equation*}
    \begin{split}% \label{NLPV edot Ly vdot}
        \dot{V}(e(t))&=(e(t))^\top \big( \mathcal{H_E}(P(A(\theta) - L(\theta) C))\big)e(t)
        \\
        &+\mathcal{H_E}\big((e(t))^\top P (E-L(\theta)D)\omega\big)+\mathcal{H_E}\big((e(t))^\top P G \tilde{f}\big) .
    \end{split}
    \end{equation*}
    Through the implementation of Young inequality~\cite[eqn. (1)]{CDC_2024_Shiv_GDO}, we deduce:
    \begin{equation*}
        \begin{split}%\label{NLPV edot Ly vdot young 1}
            \mathcal{H_E}\big((e(t))^\top P &G \tilde{f}\big)\leq \tilde{f}^\top M \tilde{f}+(e(t))^\top \big( P G (M^{-1})G^\top P\big)e(t),
        \end{split}
    \end{equation*}
    where $M\in \mathbb{S}^m_{++}$.\\
   % From~\eqref{gen NLPV form f lip},
   % \begin{equation}\label{NLPV edot Ly vdot young 1 bound}
  %        \tilde{f}^\top M \tilde{f}\leq (e(t))^\top \big( \Gamma^\top (M) \Gamma \big)e(t),
  %  \end{equation}
    By using~\eqref{gen NLPV form f lip bound}, we get:
    \begin{equation*}%\label{NLPV edot Ly vdot young 1 bound 1}
          \tilde{f}^\top M \tilde{f}\leq (e(t))^\top \big( \bar{\Gamma}^\top (M) \bar{\Gamma} \big)e(t),
    \end{equation*}
    where $\bar{\Gamma}\triangleq \begin{bmatrix}
        f_{ij_{\max}} 
    \end{bmatrix}_{m\times n}.$\\
    Thus, one can deduce:
    \begin{equation}
    \begin{split}\label{NLPV edot Ly vdot bound}
        \dot{V}(e)\leq \begin{bmatrix}
           e\\  \omega
        \end{bmatrix}^\top \begin{bmatrix}
           \Pi_{11}&P (E-L(\theta)D)\\\star &\mathbb{O}
        \end{bmatrix}\begin{bmatrix}
           e\\  \omega
        \end{bmatrix},
    \end{split}
    \end{equation}
where $\Pi_{11}=\mathcal{H_E}(P(A(\theta) - L(\theta) C))+ \big( \bar{\Gamma}^\top (M) \bar{\Gamma} \big)+\big( P G (M^{-1})G^\top P\big)$.

Through the use of~\eqref{NLPV edot Ly vdot bound}, the condition specified in~\eqref{NLPV e H inf con} is fulfilled if
\begin{equation}\label{NLPV e H inf con nes}
    \begin{bmatrix}
           \Pi_{11}
        +\sigma P&P (E-L(\theta)D)\\\star &-\mu \mathbb{I}_q 
        \end{bmatrix}    \leq 0.
\end{equation}
By deploying the Schur Lemma stated in~\cite[Sec. 2.3]{Schur_ref_2019}, we deduce:
\begin{equation}\label{NLPV e H inf con nes schur}
    \begin{bmatrix}
          \Sigma_1&PE-R(\theta) D& P G\\\star &-\mu \mathbb{I}_q &\mathbb{O}\\
        \star&\star&-M
        \end{bmatrix}    \leq 0,
\end{equation}
where  $\Sigma_1=\mathcal{H_E}(P A(\theta))-\mathcal{H_E}(R(\theta) C)+\bar{\Gamma}^\top (M) \bar{\Gamma}
        +\sigma P$ and $R(\theta)=P L(\theta)$. 
From~\eqref{NLPV obs L form}, we can deduce: $R(\theta)=R_0+{\sum^{n_\theta}_{i=1}}\theta_i R_i$, along with $R_j=P L_j,~j=0,1,\hdots,n_\theta$. 

Now, the condition specified in~\eqref{gen NLPV form theta bound} implies that all the elements of $\theta$ are bounded and belong to a convex set $\mathcal{V}_{\theta}$, for which the sets of vertices are given by:
\begin{equation}\nonumber
    \begin{split}
     \mathcal{V}_{\theta}&=\big\{
\{\theta_{1},
\hdots,\theta_{n_\theta}\} : \theta_{i} \in [\theta_{i_{\min}} ,\theta_{i_{\max}}]
  \big\} , 
    \end{split}
\end{equation}
Hence, the inequality~\eqref{NLPV e H inf con nes schur} is fulfilled if
\begin{equation*}%\label{NLPV e H inf con nes schur con}
    \begin{bmatrix}
          \Sigma_1&PE-R(\theta) D& P G\\\star &-\mu \mathbb{I}_q &\mathbb{O}\\
        \star&\star&-M
        \end{bmatrix}_{\theta \in \mathcal{V}_\theta}    \leq 0,
\end{equation*}

\begin{theorem}
Let us consider the matrices~$P\in \mathbb{S}^n_{++}$,~$Z\in \mathbb{S}^m_{++}$ and $R_i\in \mathbb{R}^{n\times p},~i=0,1,\hdots,n_\theta$, and positive scalars $\mu,~\sigma$.
If the following optimisation problem is solvable:
\begin{equation}\label{NLPV e H inf LMI}
    \begin{split}
        \min\mu\text{ subject to}&\\
        \begin{bmatrix}
          \Sigma_1&PE-R(\theta) D& P G\\\star &-\mu \mathbb{I}_q &\mathbb{O}\\
        \star&\star&-M
        \end{bmatrix}_{\theta \in \mathcal{V}_{\theta} }  & \leq 0,
    \end{split}
\end{equation}
where $\mathcal{H_E}(P A(\theta))-\mathcal{H_E}(R(\theta) C)+\bar{\Gamma}^\top (M) \bar{\Gamma}
        +\sigma P$
then, the error dynamics~\eqref{NLPV e dyn} satisfy $\mathcal{H}_\infty$ criterion~\eqref{NLPV e H inf criterion}. The observer parameters are computed by using $L_j=P^{-1} R_j,~j=0,1,\hdots,n_\theta$.
\end{theorem}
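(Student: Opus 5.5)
The plan runs the preceding derivation backwards, from the vertex LMIs to the $\mathcal{H}_\infty$ bound. Throughout we read $Z$ as the multiplier $M$, and set $L_j=P^{-1}R_j$, so that $R(\theta)=PL(\theta)$ for every $\theta$ by affinity.

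\textbf{Step 1: from the vertices to the whole box.} We first show that the vertex inequalities \eqref{NLPV e H inf LMI} imply \eqref{NLPV e H inf con nes schur} for every admissible $\theta$. Under (A2), with $L$ as in \eqref{NLPV obs L form}, the $3\times 3$ block matrix is affine in $\theta$. The entries $\bar\Gamma^\top M\bar\Gamma$, $\sigma P$, $PG$ and $-M$ are treated as $\theta$-independent; here one must take $G=G_0$ constant, as \eqref{NLPV e dyn} implicitly does, or else note that $PG(\theta)$ is still affine. Any $\theta$ in the box given by \eqref{gen NLPV form theta bound} is a convex combination of the vertices in $\mathcal{V}_\theta$. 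The matrix at $\theta$ is therefore the same convex combination of negative semidefinite vertex matrices, and so is itself $\leq 0$.

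\textbf{Step 2: undo the Schur complement.} Since $M>0$, the Schur lemma applied to the last block row and column turns \eqref{NLPV e H inf con nes schur} back into \eqref{NLPV e H inf con nes}. The $(1,1)$ block then contains $PGM^{-1}G^\top P$, and $PL(\theta)=R(\theta)$.

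\textbf{Step 3: the dissipation inequality.} Next we combine \eqref{NLPV e H inf con nes} with the bound \eqref{NLPV edot Ly vdot bound} along trajectories of \eqref{NLPV e dyn}. That bound rests on two ingredients. The first is Young's inequality for the cross term $e^\top PG\tilde f$. The second is the sector estimate $\tilde f^\top M\tilde f\le e^\top\bar\Gamma^\top M\bar\Gamma e$ obtained from \eqref{gen NLPV form f lip bound} via the mean value theorem. Multiplying \eqref{NLPV e H inf con nes} from both sides by $[e^\top\ \omega^\top]$ then yields \eqref{NLPV e H inf con}:
\[
\dot V+\sigma V-\mu\|\omega\|^2\le 0 .
\]

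\textbf{Step 4: integration.} For $\omega=0$ we get $\dot V\le-\sigma V$, hence $\|e(t)\|^2\le \frac{\lambda_{\max}(P)}{\lambda_{\min}(P)}e^{-\sigma t}\|e_0\|^2$, which is exponential stability. In general, integrating over $[0,T]$ and using $V(T)\ge 0$ gives
\[
\sigma\lambda_{\min}(P)\int_0^T\|e\|^2\,dt\le \lambda_{\max}(P)\|e_0\|^2+\mu\int_0^T\|\omega\|^2\,dt .
\]
Letting $T\to\infty$ gives \eqref{NLPV e H inf criterion} with
\[
\nu_1=\frac{\lambda_{\max}(P)}{\sigma\lambda_{\min}(P)},\qquad \nu_2=\frac{\mu}{\sigma\lambda_{\min}(P)},
\]
reading $\tilde x$ as $e$. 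This integration is the content of \cite[Remark 1]{Shiv_RNC_2024_LMI}, which we may cite. Minimizing $\mu$ only sharpens $\nu_2$ and plays no role in validity.

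\textbf{Main obstacle.} The delicate step is the sector estimate. Using only $\bar\Gamma$, the entrywise maximum of the Jacobian bounds, is not automatically valid. The mean value theorem gives $\tilde f=\Gamma(\xi)e$ with $\Gamma$ entrywise in $[\bar f_{\min},\bar f_{\max}]$, and $\Gamma^\top M\Gamma\preceq\bar\Gamma^\top M\bar\Gamma$ does not follow for general $M$ or sign patterns. I would first try to justify it by assuming $M$ diagonal and bounding row-wise, $|\tilde f_i|\le\sum_j\max(|\bar f_{ij\min}|,|\bar f_{ij\max}|)|e_j|$. This only gives $\tilde f^\top M\tilde f\le |e|^\top\bar\Gamma_{\rm abs}^\top M\bar\Gamma_{\rm abs}|e|$, where $\bar\Gamma_{\rm abs}$ is the matrix of these absolute bounds and $|e|$ is the entrywise absolute value of $e$, so the right-hand side is still in terms of $|e|$ rather than $e$. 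If that route fails, the fallback is to replace $\bar\Gamma^\top M\bar\Gamma$ by $\lambda_{\max}(M)\,\kappa^2\mathbb{I}_n$, with $\kappa$ a Lipschitz constant of $f$ derived from \eqref{gen NLPV form f lip bound}. Alternatively, one can impose vertex conditions over the Jacobian box through the differential mean value theorem. Either way, Steps 1--4 go through unchanged.
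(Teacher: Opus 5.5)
Your Steps 1, 2 and 4 follow the paper's route. The paper does the Schur step and the Young/sector bound in the text before the theorem. Its proof then only invokes the convexity principle to get \eqref{NLPV e H inf con nes} from the vertex LMIs, and asserts the $\mathcal H_\infty$ estimate. Your integration, with $\nu_1=\lambda_{\max}(P)/(\sigma\lambda_{\min}(P))$ and $\nu_2=\mu/(\sigma\lambda_{\min}(P))$, is more careful than the paper's closing line $\|e\|^2_{\mathcal L_2}\le\mu\|\omega\|^2_{\mathcal L_2}$, which drops the initial-error term and the factor $\sigma\lambda_{\min}(P)$. The obstacle you single out is real. It is exactly the line the paper does not justify (``By using \eqref{gen NLPV form f lip bound}, we get $\tilde f^\top M\tilde f\le e^\top\bar\Gamma^\top M\bar\Gamma e$'').

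The gap is that you do not close it, and your claim that ``Steps 1--4 go through unchanged'' is false for the theorem as stated. Each fallback replaces $\bar\Gamma^\top M\bar\Gamma$ in $\Sigma_1$ by a different matrix, i.e.\ it changes the hypothesis. Feasibility of the stated LMI does not imply feasibility of the modified one. No argument can avoid this: with $\bar\Gamma=[\bar f_{ij_{\max}}]$ the statement itself fails, so the paper's proof has the same hole.

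Here is a counterexample. Take $n=m=p=q=1$, all data independent of $\theta$, $A=-\tfrac12$, $G=-1$, $B=C=D=E=0$ and $f(x)=-\tanh x$. Then $-1\le f'\le 0$, so $\bar\Gamma=0$. With $P=1$, $\sigma=\tfrac12$, $M=4$ and any $\mu>0$, the LMI matrix is
\[
\begin{bmatrix}-\tfrac12&0&-1\\0&-\mu&0\\-1&0&-4\end{bmatrix}<0 .
\]
Yet plant and observer both obey $\dot z=-\tfrac z2+\tanh z$. This has an unstable equilibrium at $0$ and stable ones at $\pm z^*$, $z^*\approx 1.9$. For the paper's initial data $x_0=1$, $\hat x_0=0$ one gets $e(t)\to z^*$, so $\|e\|_{\mathcal L_2}=\infty$ although $\omega=0$. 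Indeed $\tilde f^\top M\tilde f=4\tanh^2 x>0=e^\top\bar\Gamma^\top M\bar\Gamma e$.

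What you can prove is a corrected theorem, and your third option does it cleanly. Write $\tilde f=\Gamma(t)e$ with $\Gamma(t)=\int_0^1 Df(\hat x+se)\,ds$; for vector-valued $f$ use this integral form rather than a single intermediate point $\xi$. Note that $\Gamma(t)$ lies in the Jacobian box and that $\Gamma\mapsto\Gamma^\top M\Gamma$ is matrix-convex. Then require the LMI with $\Gamma_v^\top M\Gamma_v$ in place of $\bar\Gamma^\top M\bar\Gamma$ at every vertex $\Gamma_v$ of that box, jointly with the $\theta$-vertices. For the Lipschitz variant, keep the condition linear by adding a scalar $\tau$ with $M\preceq\tau\mathbb I_m$ and using $\tau\kappa^2\mathbb I_n$. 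Under either modified hypothesis, your Steps 1--4 do go through as written.
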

\begin{proof}
From the convexity principle~(\cite{boyd1994linear}), if LMI~\eqref{NLPV e H inf LMI} is solved for all $\theta \in \mathcal{V}_{\theta}$, then one can ensure that the condition specified in~\eqref{NLPV e H inf con nes} is satisfied. Since the Lyapunov function~\eqref{NLPV edot Ly} of the system~\eqref{NLPV e dyn} fulfills~\eqref{NLPV e H inf con nes}, the trajectories of error dynamics~\eqref{NLPV e dyn} converge asymptotically to zero with optimal noise attenuation level $\mu$. Thus, the error dynamic~\eqref{NLPV e dyn} holds:$$\|e(t)\|^2_{\mathcal{L}_2} \leq \mu \|\omega\|^2_{\mathcal{L}_2}$$
\end{proof}
%-------------------------------------------------
% In the sequel, we have deployed the EKF methodology and the proposed LMI-based observer method for state estimation of a two-link robotic arm manipulator~\eqref{2arm-nonlin model}.
% 

\section{LPV and NLPV Represenations of a Two-link Arm Robot Model}
% \todo[inline]{Consolidate notation -- would be good to have $A$, $B$, $C$, $D$
%   for the (NLPV) system and $f$ and $g$ for the nonlinear control affine system.
%   Thus, we should probably rename the coefficients and constants of the
%   model. Anyways, JH finds that $n$ and $f$ are somehow unexpected here. But
%   maybe there are conventions or reasonable choices for naming these model
% parameters.}
In this section, we exemplify how a nonlinear model can be described in terms of
(quasi) LPV and NLPV equations.

A two-link arm robot manipulator can be described using the following equations of motion
\begin{equation}\label{2arm-nonlin model}
M(q(t))\ddot{q}(t) + c(q(t), \dot{q}(t))\dot{q}(t) + g(q(t)) = p_7\,\tau(t),
\end{equation}
where $q(t) := (q_1(t), q_2(t))$ are the angles and
$\tau(t):=(\tau_1(t),\tau_2(t))$ represents the motor torques. The model functions $M$, $c$, and $g$ are defined as
% \begin{equation*}
%   \begin{split}
%     M(q(t)) &= \begin{bmatrix}
%     p_1 & p_2 \cos(q_\Delta (t)) \\ 
%     p_2 \cos(q_\Delta (t)) & p_3
% \end{bmatrix}, \\
%       g(q(t)) &= \begin{bmatrix}
%     -p_4 \sin(q_1(t)) \\ 
%     -p_5 \sin(q_2(t))
% \end{bmatrix}, \\
%         c(q(t), \dot q(t)) &= \begin{bmatrix}
%     p_2 \sin(q_\Delta (t)) \dot{q}_2(t)^2 + p_6   \dot{q}_1(t) \\ 
%     -p_2 \sin(q_\Delta(t)) \dot{q}_1(t)^2 + p_6 (\dot{q}_2(t) - \dot{q}_1(t))
% \end{bmatrix}.
%         \end{split}
% \end{equation*}

\begin{equation*}
\begin{aligned}
M(q(t))
&=
\begin{bmatrix}
p_1 & p_2\cos(q_\Delta(t)) \\
p_2\cos(q_\Delta(t)) & p_3
\end{bmatrix},
\\
g(q(t))
&=
\begin{bmatrix}
-p_4\sin(q_1(t)) \\
-p_5\sin(q_2(t))
\end{bmatrix},
\\
c(q(t),\dot q(t))
&=
\begin{bmatrix}
p_2\sin(q_\Delta(t))\dot q_2(t)^2+p_6\dot q_1(t) \\
-p_2\sin(q_\Delta(t))\dot q_1(t)^2
+p_6\bigl(\dot q_2(t)-\dot q_1(t)\bigr)
\end{bmatrix}.
\end{aligned}
%\label{eq:robot-dynamics-components}
\end{equation*}

with $q_\Delta(t) := q_1(t) - q_2(t)$.
The values of the parameter vector $p$ of the model are given in
Table~\ref{tab:params-twoarm-robot}. 

\begin{table*}[t]
\centering
\caption{Model parameters of the two-link robot arm.}
\label{tab:params-twoarm-robot}
\begin{tabular}{|c|c|c|c|c|c|c|c|}
\hline
\textbf{Parameter:} & $p_1$ & $p_2$ & $p_3$ & $p_4$ & $p_5$ & $p_6$ & $p_7$ \\ \hline
\textbf{Value: (in the simulation)} & 5.6794 & 1.473 & 1.7985 & 0.4 & 0.4 & 2 & 1 \\ \hline
\textbf{Physical interpretation:} & 
\multicolumn{3}{|c|}{{Moments of inertia}} & 
\multicolumn{2}{|c|}{{Gravity scaling}} & 
{Friction} & 
{Input scaling} \\ \hline
\end{tabular}
\end{table*}

Taking the torques as controls, we rewrite the model as a nonlinear
first-order state-space equation
\begin{equation}\label{eq:2arm-nonlin}
\begin{split}
  \dot{x}(t) &= \bar f(x(t)) + \bar B(x(t))\, u(t) \\
    \quad y(t) &= 
    \begin{bmatrix}
        \mathbb I_2 & \mathbb O
    \end{bmatrix} x(t),
    \end{split}
\end{equation}
with
\begin{equation*}%\label{eq:2arm-nonlin-coefficients}
  \begin{split}
    \bar f(x) &= \begin{bmatrix}
      \begin{bmatrix}
    x_3(t) \\ x_4(t)
\end{bmatrix}  \\
        M(\begin{bmatrix}
    x_1(t) \\ x_2(t)
\end{bmatrix})^{-1}g(\begin{bmatrix}
    x_1(t) \\ x_2(t)
\end{bmatrix})- M(\begin{bmatrix}
    x_1(t) \\ x_2(t)
\end{bmatrix}))^{-1}c(\begin{bmatrix}
    x_1(t) \\ x_2(t)
\end{bmatrix}, \begin{bmatrix}
    x_3(t) \\ x_4(t)
\end{bmatrix})\,
        \end{bmatrix},\\
      \bar B(x) &= \begin{bmatrix}
        0 \\ p_7 M(\begin{bmatrix}
    x_1(t) \\ x_2(t)
\end{bmatrix})^{-1}
      \end{bmatrix}\\
  \end{split}
\end{equation*}
and where $x(t) = (\begin{bmatrix}
    x_1(t) \\ x_2(t)
\end{bmatrix}, \begin{bmatrix}
    x_3(t) \\ x_4(t)
\end{bmatrix}) := (q(t), \dot{q}(t))$ and $u(t) =
\tau(t)$. In the LPV notation that follows, we will consider $x(t)$ as a column
vector, i.e.
\begin{equation}\label{eq:xascvec}
  x(t) = \begin{bmatrix}
   x_1(t)\\x_2(t)\\x_3(t)\\x_4(t)
\end{bmatrix}=\begin{bmatrix}
   q_1(t)\\ {q}_2(t)\\\dot{q}_1(t)\\\dot{q}_2(t)
\end{bmatrix}.
\end{equation}

\subsection{Quasi LPV representation}
We consider a so-called \emph{scheduling map} $\rho(t)=\rho(x(t))=\rho((q(t),\dot q(t)))$ chosen
as in \cite{KoeT20} as $\rho: \mathbb{R}^4 \to \mathbb{R}^{10}$:
\begin{equation}\label{eq:lpv-rho-sched-map}
  % \begin{split}
    \rho((q,\dot{q})) = % \\
            \begin{bmatrix}
        \frac 1h \\
        \frac 1h \cos(q_\Delta) \\
        \frac 1h \sin(q_1)\frac{1}{q_1} \\
        \frac 1h \cos(q_\Delta)\sin(q_2)\frac{1}{q_2} \\
        \frac 1h (-p_2^2 \sin(q_\Delta) \cos (q_\Delta) \dot q_1 - p_6(p_3 +
        p_2\cos(q_\Delta))) \\
        \frac 1h (-p_3\sin(q_\Delta) \dot{q_2} +  p_6\cos(q_\Delta)) \\
        \frac 1h \cos(q_\Delta) \sin(q_1)\frac{1}{q_1} \\
        \frac 1h \sin(q_2)\frac{1}{q_2} \\
        \frac 1h (p_1p_2\sin(q_\Delta)\dot{q_1} + p_6(p_1 + p_2\cos(q_\Delta))) \\
      \frac 1h (p_2^2\sin(q_\Delta)\cos(q_\Delta) \dot{q}_2 - p_1p_6)
    \end{bmatrix},
  % \end{split}
\end{equation}
where $h = \operatorname{det}(M) = p_1p_3 - p_2^2\cos^2(q_\Delta)$.
We note that the function $\frac{\sin(s)}{s}$ is continiously extendable at
$s=0$ and that with the chosen parameters (see Tab.
\ref{tab:params-twoarm-robot}) the value of $1/h$ is bounded.
With the help of \eqref{eq:lpv-rho-sched-map}, the nonlinear model \eqref{eq:2arm-nonlin} with $x$ as defined in
\eqref{eq:xascvec} can be equivalently rewritten as a so-called quasi LPV system:
\begin{equation}\label{eq:2arm-nonlin LPV form}
\begin{aligned}
    \dot{x}(t) &= A(\rho(t))x(t) + B(\rho(t))u(t), \\
    y(t) &= Cx(t) + Du(t),
    \end{aligned}
\end{equation}
with
\begin{equation*}
  \begin{split}
    A(\rho) &= 
    \begin{bmatrix}
        0 & 0 & 1 & 0 \\
        0 & 0 & 0 & 1 \\
       - p_3 \, p_4\,\rho_3 & p_2 \, p_5\,\rho_4 & \rho_5 & p_2\,\rho_6 \\
        p_2 \, p_4\,\rho_7 & - p_1\, p_5\,\rho_8 & \rho_9 & \rho_{10}
    \end{bmatrix},\\
    B(\rho) &= 
    \begin{bmatrix}
        0 & 0 \\
        0 & 0 \\
        p_3\, p_7\,\rho_1 & -p_2\, p_7\,\rho_2 \\
        -p_2\, p_7\,\rho_2 & p_1\, p_7\,\rho_1
    \end{bmatrix}, \quad
    C = \begin{bmatrix}
       \mathbb  I_2 & \mathbb O
    \end{bmatrix}, \quad D = \mathbb{O}.
  \end{split}
\end{equation*}
Here all nonlinearities are factorized into the coefficient matrices $A$ and
$B$.

%a link to connect linear and nonlinear models Maybe we can use example of vehicle suspension 
% ~\textcolor{red}{Is it possible to add few comments related to limitations with the linear model?}

% ~\textcolor{blue}{NLPV model is updated with new sign change}

%%%%%%%%%%Shivaraj...%%%%%%%%%%
\subsection{Quasi NLPV representation}
% In this subsection, we derive yet another representation of the aforementioned
% two-link robotic arm model as nonlinear LPV system. 

A LPV representation as in \eqref{eq:2arm-nonlin LPV form} seems
beneficial
because of it immediate alignment with LPV theory. However, a complicated and
large-dimensional scheduling map can make standard LPV observer design
difficult because of the involved dependencies and possibly too conservative
bounds on the parameter, that makes the design infeasible.

The idea of NLPV tries to mitigate this effect by factorizing only a part of
the dynamics and treating some parts explicitly as nonlinearity.

By straight-forward comparison arguments, we find that
the LPV system~\eqref{eq:2arm-nonlin LPV form} can be written in the following form:
\begin{equation}\label{eq:2arm-nonlin NLPV form}
    \begin{split}
        \dot{x}(t)&= A(\theta(t)) x(t)+ G(\theta(t)) f(x(t))+B(\theta(t)) u(t),\\
        y(t)&=C x(t),
    \end{split}
\end{equation}
where 
\begin{equation}\nonumber
    \begin{aligned}
     A(\theta(t))  &= \begin{bmatrix}
          0&0&1&0\\0&0&0&1\\
          0&0&\frac{1}{h}\left(- (p_3 + p_2\theta_1) p_6 \right)&\frac{1}{h}\left(\theta_1 p_6 \right)\\
          0&0&\frac{1}{h}\left( (p_1+p_2\theta_1) p_6 \right) &\frac{-p_1 p_6}{h}
\end{bmatrix},
\\
~G(\theta(t))&=\begin{bmatrix}
0&0&0&0\\ 0&0&0&0\\ -\frac{p_3 p_4}{h} &+\frac{p_2 p_5}{h}(\theta_1)&-\frac{p_2^2}{h} \theta_3 &-\frac{p_3 p_2}{h}\theta_2 \\\frac{p_2 p_4}{h}(\theta_1)&-\frac{p_1 p_5}{h} &\frac{p_1 p_2}{h}\theta_2 & \frac{p_2^2}{h}\theta_3 
\end{bmatrix},\\ 
f(x(t))&=\begin{bmatrix}
\sin(x_1)\\\sin(x_2)\\x^2_3\\x^2_4
      \end{bmatrix},
B(\theta(t))=
    \begin{bmatrix}
          0&0\\
          0&0\\
          \frac{p_3}{h}&\frac{-p_2}{h} \theta_1\\
          \frac{-p_2}{h} \theta_1&\frac{p_3}{h}
\end{bmatrix}
    \end{aligned}
\end{equation}
along with $\theta_1(t)=\cos(x_1(t)-x_2(t))$, $\theta_2(t)=\sin(x_1(t)-x_2(t))$,
$\theta_3(t)=\theta_1(t)\cdot \theta_2(t)$ and $h(t)=ac - b^2\theta^2_1(t)$.

System~\eqref{eq:2arm-nonlin NLPV form} encompasses fewer state-dependent
parameters but incorporates additional nonlinearities compared
to~\eqref{eq:2arm-nonlin LPV form}. Thus, the model identification task is much
easier in the case of the system~\eqref{eq:2arm-nonlin NLPV form}. Since the
time-varying parameters $\theta_1(t),~\theta_2(t)$ and $h(t)$ are depending on
the state vector $x(t)$, the model~\eqref{eq:2arm-nonlin NLPV form} is commonly
referred to as~\emph{quasi-NLPV} model, 
where the term \emph{quasi} points to the deviation to a standard (N)LPV model,
where the parameter is an external variable.

Another advantage is that $\theta_1,~\theta_2,~\theta_3$ and $h$
only depend on measured output $y(t)$. Thus, we can consider $\theta_1(x(t))$,
$\theta_2(x(t))$, $\theta_3(x(t))$ and $h(x(t))$ as known during the simulation. 
With that, we can also assume the we can find empirical bounds  on
$\theta_1(t),~\theta_2(t),~\theta_3(t)$ and $h(t)$, i.e.,
\begin{equation*}
    \begin{split}
     \theta_{1_{\min}}  \leq \theta_1(t)\leq   \theta_{1_{\max}},
     \theta_{2_{\min}}  \leq \theta_2(t)\leq   \theta_{2_{\max}}, \\
     \theta_{3_{\min}}  \leq \theta_3(t)\leq   \theta_{3_{\max}}, 
     h_{{\min}}  \leq h(t)\leq   h_{{\max}}.
    \end{split}
\end{equation*}
%reviwer might be not agree if we directly start with Quasi LPV or NLPV observer we need to add certain remark and reason to treat system as LPV or NLPV instead of Quasi NLPV and we can mention Quasi NLPV if the considered assumption is not hold

\section{Numerical Experiments}

In the sequel, we will present numerical nonlinear state estimation results for 
two-link robot manipulator~\eqref{2arm-nonlin model} for the presented NLPV
approach and compare to the established \emph{extended Kalman filter} and to the
approach of \emph{moving horizon estimation} (MHE), whose initial concepts were presented in a prior study \cite{Ji24}.

For the simulation, the true initial state was \(x(0)=[1,1,1,1]^\top\), whereas
the observers were initialized at $\hat x(0)=[0,0,0,0]^\top$. 

% \todo[inline]{JH: Question: Generally spoken, if we do not know the state $x(t)$,
%   we don't have the $\rho(x(t))$. Practically, we will use the estimate
% $\rho(\tilde x)$. Do we need extra analysis for that? Is this the same as
% \emph{robust} observer design? Check \cite{Sat12}}
% 
% Compare also the \emph{correct} $\rho(x)$ with the estimated $\rho(\tilde x)$.
% 
% Plan for Numerical Experiments
% 
% 
% \begin{itemize}
%   \item quantity to observe: $x_2$ which is $\dot q$
%   \item try/compare with EKF -- do we need noise for that?
%   \item and/or MHE? -- do we need noise for that?
%   \item then: qLPV and qNLPV
% \end{itemize}

%EKF results------------------------------------------------------
%-------------------------------------------------------
%------------------------------------------------------
\subsection{State Estimation with the Extended Kalman Filter}
    As a benchmark, we evaluated the performance of the standard continuous‐time \emph{extended
    Kalman Filter} (EKF) \cite{Rib04} on the two‐link robot arm. The simulations were carried
    out with a fixed step size of $\Delta t = 0.001$ over a total duration of $T
    = 15$ ($N = 15000$) steps. Both measurement noise $v$ and state noise $w$ are assumed
    to be zero-mean Gaussian with identical covariance, i.e.
    \begin{equation*}
    v(t) \sim \mathcal{N}(0, \,0.1\cdot \mathbb{I}_{4 }), \quad w(t) \sim
    \mathcal{N}(0, \,
    0.1\cdot \mathbb I_{2 } ). %\mathbb I_2
   \end{equation*}
The initial estimation covariance was set to $P(0)=\mathbb I_{4}$. Figure~\ref{fig 1 state vs time NLPV obs} shows the true and estimated trajectories of the four states. We quantify the average error per step by
\begin{equation*}
  {e_{ekf}}_i 
  = \| x_i - \hat{x}_i \|_{\mathcal{L}_2}, \quad\text{for }
% = \sqrt{\frac{1}{N}\sum_{k=1}^{N} \bigl(x_i(k) - \hat x_i(k)\bigr)^2},
\quad i=1,\dots,4.
\end{equation*}
where the norm is evaluated via a numerical integration over the data given at
the discrete time instances.
% \todo[inline]{Write error with the $\mathcal L_2$ norm.}
For the given noise levels, the resulting estimation error values for duration of $10\leq t \leq 15$ were illustrated in Table~\ref{tab 2 rmse}.

%figure 
%\begin{figure}[ht]
%  \centering
%  \includegraphics[width=\linewidth]{image/ekf.pdf} 
%  \caption{True and estimated state trajectories under EKF}
% \label{fig:ekf}
%\end{figure}
%Can you make figure wrt time instead of sample?
It is shown that the EKF estimation exhibits a high degree of consistency with the true state trajectories.
%MHE-------------------------------------------------------------
\subsection{State Estimation with the Moving Horizon Estimation}
For comparison, we implemented a MHE approach with fixed horizon length $
(N_k=10)$ and the same noise as used for the EKF simulation.
Following the MHE design \cite{RawMD17}, at each sampling time $k$, we consider
only the historical data from $k-N_k$ to $k-1$, and for $k<N_k$, generally
consider the MHE problem to be the problem commonly classified as \emph{full
information estimation} (FIE):
\begin{equation*}
\begin{aligned}
V_k(\chi_0, \boldsymbol{\omega})  :=& \min_{x_0, \omega, \nu} \, \ell_0(\chi_0 - \bar{x}_0) 
+ \sum_{i=0}^{k-1} \ell(\omega_{i|k}, \nu_{i|k}) \\
\text{subject to } \quad 
& \chi_{i+1|k} = f(\chi_{i|k}, \mu_{i|k}) + \omega_{i|k} \\
& y_{i|k} = h(\chi_{i|k}, \mu_{i|k}) + \nu_{i|k}
\end{aligned}
\end{equation*}
where $\bar{x}_0=[0 \quad 0 \quad 0 \quad 0]^\top$ is the prior information on the initial state, $\mathbf{y}_k = \begin{bmatrix} y_{0|k}^T, \dots, y_{k|k}^T \end{bmatrix}^T$ is a sequence of measurements from moment $0$ to $k - 1$, the prior weighting matrix and stage cost matrix are defined as: $\ell_0(\cdot)=\mathbb I_4$ and $\ell(\cdot)=200\cdot \mathbb I_4$. The decision state estimates, state and measurement disturbance estimates are denoted as $\boldsymbol{\chi} $, $\boldsymbol{\omega}$ and $\boldsymbol \nu $, respectively.

%For $j \in \mathbb{I}_{0:k}$, we refer to the $j^{\text{th}}$ element of
%$\hat{\mathbf{x}}_k$ as $\hat{x}_{j|k}$. Similarly, we denote the 
%$j^{\text{th}}$ state estimate and measurement disturbances as $\hat{w}_{j|k}$ 
%and $\hat{v}_{j|k}$, respectively, for $j \in \mathbb{I}_{0:k-1}$.
At time instant $k>N_k$, consider the cost function as follows
\begin{equation}\label{eq:mhe-algo}
\begin{aligned}
V_k(\chi_{k-N|k}, \boldsymbol{\omega}) := & \min_{\chi_{k-N|k}, \omega, \nu} 
\, \ell_{k-N|k}(\chi_{k-N|k} - \bar{x}_{k-N}) 
\\
&+ \sum_{i=k-N}^{k-1} \ell(\omega_{i|k}, \nu_{i|k}) \\
\text{subject to } 
& \chi_{i+1|k} = f(\chi_{i|k}, \mu_{i|k}) + \omega_{i|k} \\
& y_i = h(\chi_{i|k}, \mu_{i|k}) + \nu_{i|k}
\end{aligned}
\end{equation}
where the prior estimate $\bar{x}_{k-N}$ at each sampling time $k$ is propagated based on the state function and the last estimated state $\hat{x}_{k-N_k-1|k}$. $\mathbf{y}_{N_k} = \begin{bmatrix} y_{k-N|k}^T, \dots, y_{k|k}^T \end{bmatrix}^T$ 
is a sequence of measurements from moment $k - N$ to $k - 1$. The prior weighting matrix and the stage cost matrix are defined as: $\ell_{k-N|k}(\cdot)=100\cdot \mathbb I_4$ and $\ell(\cdot)=200\cdot \mathbb I_4$. Due to the computational cost of MHE, a relatively large sampling time was chosen for the simulation. The resulting state trajectories are shown in Figure~\ref{fig:mhe}, corresponding to a sampling time of $\Delta t=0.05$, and demonstrating that MHE provides a reasonable estimate of both angle and velocity states. 

\begin{remark}
  We did not compare the estimation plots with the other approaches for two
  reasons. Firstly, because of the repeated solution of the optimization problem
  in \eqref{eq:mhe-algo}, the algorithm is computationally demanding so that
  only a rougher time discretization could be chosen, which, however, means a
  worse performance in the estimation. Secondly, because of the different time
  step, the noise realization in the simulation led to a slightly different
  nominal trajectory so that a direct comparison is not possible. However, we
  have added the $\mathcal L_2$ estimation error in Table \ref{tab 2 rmse} to
  make the comparison. There, we also confirm that the performance hinges on the
  time step size and provide the much improved performance on a finer time grid
  too.
\end{remark}
%figure 
\begin{figure}[t]
  \centering
  \includegraphics[width=\linewidth]{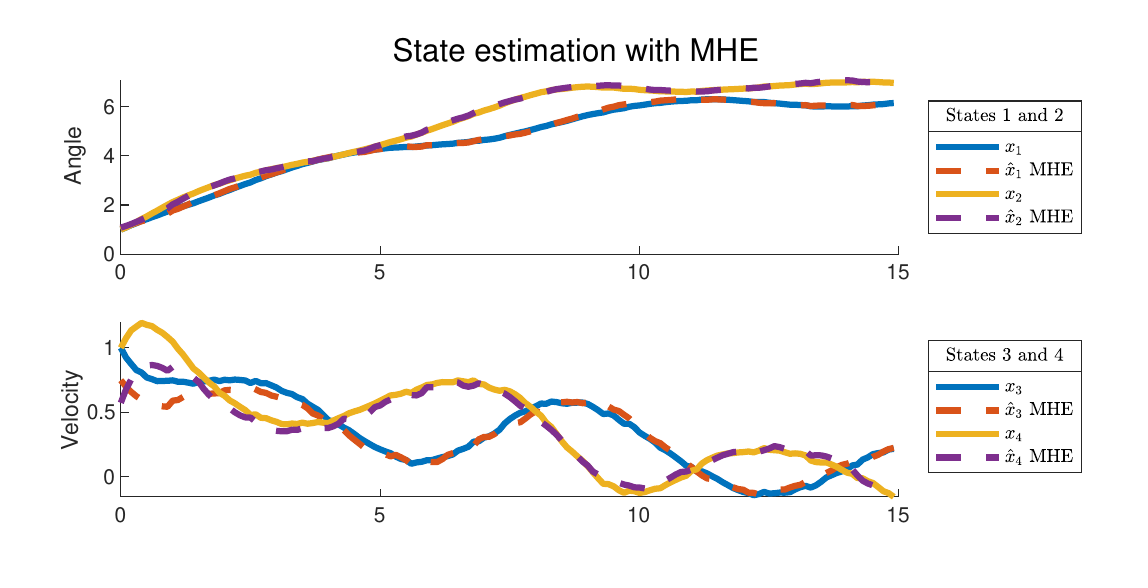} 
  \caption{True and estimated state trajectories under MHE}
  \label{fig:mhe}
\end{figure}
%--------------------------

\begin{figure}[t]
    \centering
    \includegraphics[width=\linewidth]{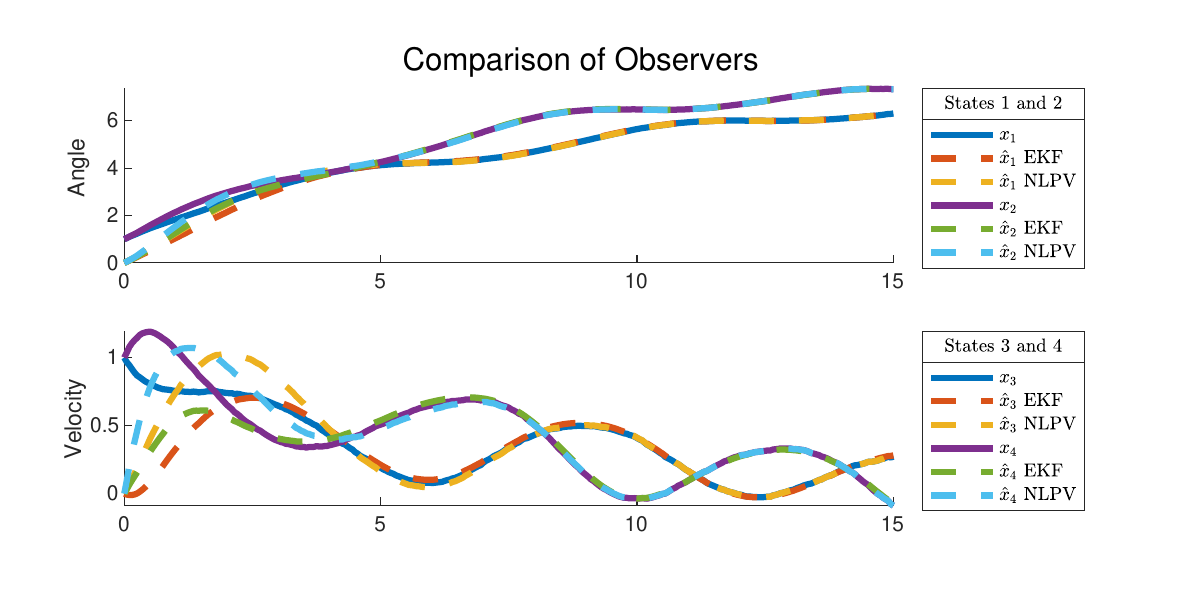}
    \caption{Illustration of estimated state trajectories v.s. actual state
    trajectories. }
    %TODO: need to export these pictures properly (best as vector graphics). Now they have different fonts and they are blurry if one zooms in.}
    \label{fig 1 state vs time NLPV obs}
\end{figure}
\subsection{NLPV observer}
The NLPV model of a two-link robotic arm~\eqref{eq:2arm-nonlin NLPV form} under the presence of noise/disturbance is illustrated as:
\begin{equation*}%\label{eq:2arm-nonlin NLPV form noise}
    \begin{split}
        \dot{x}(t)&= A(\theta(t)) x(t)+ G(\theta(t)) f(x(t))+B(\theta(t)) u(t)+E \omega,\\
        y(t)&=C x(t)+D \omega,
    \end{split}
\end{equation*}
where $\omega= \mathcal{N}(0,0.1)$ represents the noise/disturbance affecting the system dynamics and output. Further, $E=\begin{bmatrix}
    1&1&1&1
\end{bmatrix}^\top$ and $D=\begin{bmatrix}
   1&1
\end{bmatrix}^\top$. The remaining parameters and variables are the same as in~\eqref{eq:2arm-nonlin NLPV form}. 

The general conceptual and practical approach to treat quasi LPV systems with
LPV methods bases on the assumption that the range of the parameter $\theta(x)$
is bounded with known bounds and that, at runtime, the current parameter value
$\theta(t)$ is obtained by $\theta(x(t))$.

\begin{itemize}
  \item[1.] As for the bounds, we run example simulations to determine the
    bounds for $\theta_{\min}
    \leq \theta(x(t)) \leq \theta_{\max}$
    empirically. We note that, oftentimes, bounds can be derived analytically because of known
    bounds for $\cos$, $\sin$, $\sin(x)/x$.
  % \item[1.] Use the bounds to transform the affine LPV representation into
  %   polytopic form.
  \item[2.] As for the state dependencies, we note that in our formulation, the
    $\theta(x)$ only depends on measured quantities. % Simply use $\rho(t) = \rho(x(t))$ in the simulations.
\end{itemize}

The observation of nominal trajectory led to the following partially empirical
bounds on the parameters: 
$\theta_{1_{\min}}=-0.1$,~$\theta_{1_{\max}}=1$,~$\theta_{2_{\min}}=-1$,~$\theta_{2_{\max}}=0.6$,~$\theta_{3_{\min}}=-0.5$,~$\theta_{3_{\max}}=0.5$,~$h_{{\min}}=0.098$ and~$h_{{\max}}=0.125$.

The observer~\eqref{NLPV obs} is employed in the MATLAB environment.
The initial value of the system and observer are as follows: $x_0=\begin{bmatrix}
    1 &1 &1&1 
\end{bmatrix}^\top$ and $\hat{x}_0=\begin{bmatrix}
    0 &0 &0&0 
\end{bmatrix}^\top$
In order to compute the observer parameters, the authors have solved the proposed LMI~\eqref{NLPV e H inf LMI} using YALMIP solver(-{SDP}) in MATLAB 2023b. 
Further, the obtained LMI solution is compared with the method proposed in~\cite{LPV_2020_RNC} and summarized in Table~\ref{Tab 1 LMI comparison}. It clearly indicates that the proposed LMI is feasible for the aforementioned system whereas the LMI of~\cite[Theorem 1]{LPV_2020_RNC} provides an infeasible solution.
Further, the gain parameters obtained from the solution of LMI~\eqref {NLPV e H inf LMI} are illustrated as follows:
\begin{equation*}
\begin{aligned}
    L_0&=10^4\times \begin{bmatrix}
    0.0130  & -0.0129\\
   -0.0058  &  0.0059\\
    0.4528  & -0.4527\\
   -1.4347  &  1.4348
\end{bmatrix},
~L_1=10^4\times \begin{bmatrix}
    0.0116 &  -0.0116\\
   -0.0041  &  0.0041\\
    0.1301  & -0.1301\\
   -1.1561  &  1.1561
\end{bmatrix},~\\
L_2&=10^3\times \begin{bmatrix}
   -0.8358 &   0.8360\\
    0.4481 &  -0.4479\\
 -165.3136 & 165.3139\\
  628.5105 &-628.5112
\end{bmatrix},~
L_3=10^3\times \begin{bmatrix}
   -0.0282 &   0.0281\\
    0.1353 &  -0.1354\\
  -90.4052 &  90.4064\\
  304.1313 &-304.1381
\end{bmatrix}.
\end{aligned}
\end{equation*}
\begin{table}[t]
\centering
\caption{Comparison of LMI performance}
%\resizebox{\columnwidth}{!}
{%
\begin{tabular}{|c|c|c|c|}
\hline
\multirow{2}{*}{No.} & \multirow{2}{*}{\begin{tabular}[c]{@{}c@{}}Noise attenuation\\ parameters\end{tabular}} & \multicolumn{2}{|c|}{LMI methods} \\ \cline{3-4} 
 &  & LMI~\eqref{NLPV e H inf LMI} & \cite[LMI~(24)]{LPV_2020_RNC} \\ \hline
1 & $\sqrt{\mu}$ & \multicolumn{1}{c|}{$3.6085\times 10^{-05}$} & Infeasible \\ \hline
\end{tabular}%
}
\label{Tab 1 LMI comparison}
\end{table}
% \todo[inline]{Jan: Question: What if we put LPV observer result as infeasible solution in the paper and highlight that LPV observer is not feasible at this stage. That's the reason we need to go for NLPV approach or the enhancement of the existing LPV approach as a future scope. or go for new technique}

Further, the observer~\eqref{NLPV obs} is implemented in the MATLAB environment for the state estimation through the use of the aforementioned gain matrices. Figure~\ref{fig 1 state vs time NLPV obs} represents the plots of estimated states and actual states. It shows that the proposed observer attenuates noise/disturbance present in the dynamics and output, and provides an accurate state estimation. The behavior of estimation error is described in Figure~\ref{fig 2 error vs time NLPV obs}. It clearly shows the asymptotic convergence of estimation error and efficient noise attenuation. For more clarity, the authors have computed the RMSE values of estimation error for duration of $5\leq t \leq 15$, and illustrated in Table~\ref{tab 2 rmse}.
\begin{figure}[t]
    \centering
   \includegraphics[width=\linewidth]{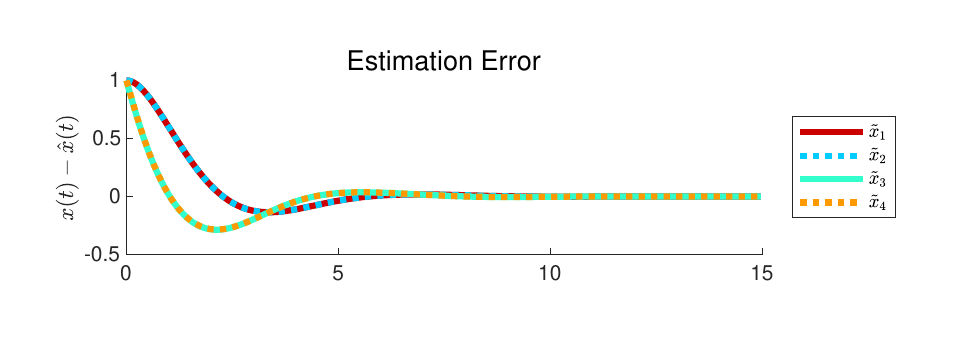} 
    \caption{Graph of estimation error in case of NLPV observer. }
    %TODO: this is too large -- use same style as the other plots.
    \label{fig 2 error vs time NLPV obs}
\end{figure}
\begin{table*}[t]
\begin{center}
\caption{$\mathcal L_2$ values of estimation error $\tilde{x_i}$, $i=1,2,3,4$
for the different nonlinear approaches on different observation horizons.}
\label{tab 2 rmse}
\renewcommand{\arraystretch}{1.1}
\begin{tabular}{|c|c|c|c|c|c|c|c|c|}
\hline
& \multicolumn{4}{|c|}{Observation interval: $0\leq t \leq 15$} & \multicolumn{4}{|c|}{Observation interval: $5\leq t \leq 15$}\\ \hline
& $\tilde{x}_1$&$\tilde{x}_2$&$\tilde{x}_3$&$\tilde{x}_4$ & $\tilde{x}_1$&$\tilde{x}_2$&$\tilde{x}_3$&$\tilde{x}_4$  \\ \hline
NLPV &   \begin{tabular}{c}
       $0.1885$
   \end{tabular}  & \begin{tabular}{c}
       $0.1885$
   \end{tabular}  &  \begin{tabular}{c}
       $0.1269$
   \end{tabular}&\begin{tabular}{c}
       $0.1273$
   \end{tabular}
   &$0.0278$ & $0.0278$ &$0.0435$ &$0.0435$ \\ \hline
% NLPV observer of~\cite{LPV_2020_RNC} & \multicolumn{8}{|c|}{Infeasible solution} \\ \hline
% ~\textcolor{red}{LPV Observer}~%\cite{} & \multicolumn{8}{|c|}{Infeasible solution} \\ \hline
EKF~%\cite{} 
& $0.2310$& $0.2468$& $0.1557$&$0.1804$ & $0.0307$&$0.0311 $&$0.0361$&$0.0378$   \\ \hline
MHE ($\Delta t=0.1$)~%\cite{} 
& $0.2101$&$0.2267$&$0.3338$&$0.4401$ & $0.1413$& $0.1327$&$0.0972$ & $0.1091$
\\  \hline
MHE ($\Delta t=0.05$)~%\cite{} 
& $0.2046$&$0.2426$&$0.2679$&$0.2427$ & $0.0812$&$0.0759$ &$0.0941$ &$0.1012$\\  \hline
\end{tabular}
%\vspace{-0.5cm}
\end{center}
\end{table*}
% \todo[inline]{JH: suggestion: You can just insert the values of RMSE wrt to method (either EKF or MHE). And please include there ref also.}

\section{Conclusion}

In this work, we have examplified and applied the NLPV factorization and
observer design to a two-link robot arm model. The NLPV approach showed its
advantages over plain LPV factorizations by a simpler scheduling map that only
depends on measured quantities. In the numerical simulation, the NLPV observer
competed well with the established EKF approach.

Future work will concern robustness issues that will arise if the variables
needed for the NLPV factorization need to be estimated too like in the case that
they are not available through the system's output.

%--------------------------------------------------
% \begin{thebibliography}{99}

% \bibitem{1} Spiegel, M. R. (1981). Theory and problems of Advanced Calculus: Si (metric) edition. McGraw-Hill. 

% \end{thebibliography}
\bibliographystyle{abbrv}
\bibliography{references}

\end{document}